\documentclass[oneside]{article}

\usepackage{amsmath, amsthm}
\usepackage{tikz}
\usepackage{pgfplots}
\usepackage{filecontents}
\usepackage{hyperref}

\theoremstyle{theorem}
\newtheorem{theorem}{Theorem}
 
\theoremstyle{definition}

% \newtheorem{theorem}{Theorem}[section]
% \newtheorem{iTheorem}{Theorem}
% \newtheorem{lemma}[theorem]{Lemma}
% \newtheorem{proposition}[theorem]{Proposition}
% \newtheorem{corollary}[theorem]{Corollary} 
% \theoremstyle{definition}  
% \newtheorem{definition}[theorem]{Definition}
% \newtheorem{example}[theorem]{Example}
% \newtheorem{conjecture}[theorem]{Conjecture}  
% \newtheorem{question}[theorem]{Question}
% \newtheorem{remark}[theorem]{Remark}
% \newtheorem{note}[theorem]{Note}
% \renewcommand{\theiTheorem}{\Alph{iTheorem}}

% \@addtoreset{equation}{section}
% \def\theequation{\arabic{section}.\arabic{equation}}

\newcommand{\red}[1]{{\color{purple}{#1}}}
\newcommand{\blue}[1]{{\color{blue}{#1}}}

\hyphenation{theo-re-ti-cal group-theo-re-ti-cal
semi-sim-ple al-geb-ras di-men-sions sim-ple ob-jects
equi-va-lent pro-per-ties ca-te-go-ries ques-tion mo-dule
e-print auto-equi-valence equi-va-ri-an-ti-za-tion}

\allowdisplaybreaks

\begin{filecontents*}{stutter_ratios.dat}
    day ratio
    1 2.0
    2 2.0
    3 1.5
    4 1.5
    5 1.4444444444444444
    6 1.3846153846153846
    7 1.3888888888888888
    8 1.36
    9 1.3529411764705883
    10 1.3478260869565217
    11 1.3225806451612903
    12 1.3414634146341464
    13 1.3363636363636364
    14 1.3945578231292517
    15 1.5073170731707317
    16 1.5016181229773462
    17 1.5086206896551724
    18 1.5271428571428571
    19 1.6903648269410665
    20 1.5904814609850582
    21 1.5045233124565065
    22 1.446345975948196
    23 1.431403901503038
    24 1.427502234137623
    25 1.4141951639408405
    26 1.3990150509074812
    27 1.3989241783016255
    28 1.3999830359919703
    29 1.404576298570159
    30 1.4078563315072827
    31 1.411377214931318
    32 1.4437859546293281
    33 1.478844650267637
    34 1.488167014047752
    35 1.4773881477365374
    36 1.4622955620982536
    37 1.4903751179326084
    38 1.472739142551184
    39 1.4510349915065428
    40 1.432712128293084
    41 1.427056951753864
    42 1.4332115972997912
    43 1.4333996582999167
    44 1.4269717753434499
    45 1.427351182988416
    46 1.432400909990689
    47 1.4402851995507169
    48 1.4442713359552017
    49 1.4425356888269207
    50 1.4510523972222664
    51 1.4615499615332783
    52 1.4643237010790051
    53 1.4571843264045004
    54 1.4474243647409666
    55 1.4518731425081048
\end{filecontents*}

\begin{filecontents*}{std_ratios.dat}
    day ratio
    1 2.0
    2 2.0
    3 1.0
    4 1.5
    5 1.6666666666666667
    6 1.2
    7 1.1666666666666667
    8 1.5714285714285714
    9 1.1818181818181819
    10 1.1538461538461537
    11 1.4666666666666666
    12 1.2727272727272727
    13 1.25
    14 1.4
    15 1.3265306122448979
    16 1.2461538461538462
    17 1.3333333333333333
    18 1.3518518518518519
    19 1.226027397260274
    20 1.312849162011173
    21 1.3361702127659574
    22 1.2611464968152866
    23 1.3257575757575757
    24 1.318095238095238
    25 1.2919075144508672
    26 1.3053691275167785
    27 1.3161953727506426
    28 1.2936197916666667
    29 1.2989431303472572
    30 1.3142192948469587
    31 1.2951061320754718
    32 1.2997951286137037
    33 1.312784588441331
    34 1.2996264674493063
    35 1.3030178608088687
    36 1.3061288797857256
    37 1.3046441495778045
    38 1.300263510702233
    39 1.304700277323473
    40 1.3052921299324176
    41 1.299567840664732
    42 1.3057126333376172
    43 1.304461231821649
    44 1.3014789104353732
    45 1.3050192770385831
    46 1.304147670163319
    47 1.3026438489740129
    48 1.3036460274187538
    49 1.3042972413205154
    50 1.302982475027491
    51 1.3030887831507507
    52 1.3045546823429774
    53 1.3028969331287363
    54 1.3035777926253866
    55 1.3040511502625836
\end{filecontents*}

\begin{document}

\title{Stuttering look and say sequences and a challenger to Conway's most complicated algebraic number from the silliest source}
\markright{Stuttering Look and Say Sequences}
\author{Jonathan Comes}
% \email{jcomes@collegeofidaho.edu}

% \date{\today}

\maketitle

\begin{abstract}
    We introduce stuttering look and say sequences and describe their chemical 
    structure in the spirit of Conway's work on audioactive decay. We show the 
    growth rate of a stuttering look and say sequence is an algebraic integer of degree 415.
\end{abstract}

\section{Introduction}

The following is an example of a \emph{look and say sequence}:
\begin{equation}\label{equation: standard las 111}
    111\to 31\to 1311\to 111321\to 31131211\to 132113111221\to\cdots
\end{equation}
Roughly speaking, the rule for generating the sequence is ``say what you see''. More precisely, the sequence above starts with the \emph{seed} 111. When we look at 111 we see \red{three} \blue{1}'s and thus the next term is \red{3}\blue{1}. Looking at 31 we see \red{one} \blue{3} followed by \red{one} \blue{1}, so the next term is \red{1}\blue{3}\red{1}\blue{1}. Continuing, from 1311 we see \red{one} \blue{1}, \red{one} \blue{3}, and then \red{two} \blue{1}'s, which gives the next term \red{1}\blue{1}\red{1}\blue{3}\red{2}\blue{1}.

John H.~Conway completely described the mathematical structure of look and say sequences in \cite{Conway}. In particular, Conway determined the growth rate of every look and say sequence. More precisely, he showed the ratios of the number of digits in successive terms in almost all\footnote{The only exceptions are the look and say sequences with the empty seed and the seed 22.} look and say sequences approach what is now known as \emph{Conway's constant}: 
\begin{equation}\label{Conway constant}
    \lambda = 1.303577269\ldots%0342963912570991121525498\ldots
\end{equation}
In other words, eventually each term in a look and say sequence like (\ref{equation: standard las 111}) will be approximately 30\% longer than its predecessor. Remarkably, Conway found $\lambda$ as the maximal real root of an irreducible degree 71 polynomial. In 1997 Conway remarked \emph{``To my mind, $\lambda$ is the current record holder for finding the most complicated algebraic number from the silliest source. I hope more math-funsters will get into this stuff and produce other challengers!''}\footnote{The quote can be found at \url{http://www.mathematik.uni-bielefeld.de/~sillke/SEQUENCES/series000}.}

\subsection{Stuttering look and say sequences}
In this article we produce such a challenger by looking at a variation of look and say sequences that we call \emph{stuttering look and say sequences}. To describe these sequences, it will be convenient to recall Conway's exponent notation from \cite{Conway}. Given any digit $d$ we write $d^n$ for the concatenation of $n$ copies of $d$. For example $2^3=222$. We use this exponent notation on runs of digits within a larger string of digits so that, for example, the sequence (\ref{equation: standard las 111}) can be written as 
\begin{equation*}
    1^3\to 31\to 131^2\to 1^3321\to 31^23121^2\to 1321^231^32^21\to\cdots
\end{equation*}
With this notation, the say what you see rule on a run of $n$ copies of a digit $d$ is merely $d^n\to nd$. Our stuttering look and say sequences are obtained by replacing this rule with $d^n\to n^nd$. For example, $222=2^3\to 3^32=3332$. In other words, when we look at $222$ and see \red{three} \blue{2}'s, we stutter the \red{three} (but not the \blue{2}) \red{three} times to obtain \red{333}\blue{2}. Here is a larger example: 
\[2222551=2^45^21^1\to 4^422^251^11=4444222511.\] 
Repeatedly applying this stuttering say what you see rule results in a stuttering look and say sequence. For example, here is the stuttering look and say sequence starting with seed 0:
\begin{equation}\label{stuttering sequence with seed 0}
    0\to 10\to 1110\to 333110\to 333322110\to 4444322222110\to\cdots 
\end{equation}
Compare this to the standard look and say sequence with seed 0:
\begin{equation}\label{std sequence with seed 0}
    0\to 10\to 1110\to 3110\to 132110\to 1113122110\to\cdots 
\end{equation}
The terms of (\ref{stuttering sequence with seed 0}) are growing in length faster than those in (\ref{std sequence with seed 0}). Indeed, this is evident in the following where the ratio of the number of digits in successive terms is plotted for the first 55 pairs of successive terms in both sequences. 
\[\begin{tikzpicture}
\begin{axis}[
    enlargelimits=true,
    axis lines=center,
    xmin = 0,
    xmax = 55,
    ymin = 0.75,
    ymax = 2.2,
    xlabel = {},
    ylabel = {ratio},
    legend pos = north east,
    colormap/blackwhite
]
\addplot+[
    only marks,
    scatter,
    mark=o,
    point meta=0,
    color=black,
    mark size=2.4pt,
    mark options={fill=white}
    ]
table{stutter_ratios.dat};
\addplot+[
    only marks,
    scatter,
    mark=star,
    point meta=0,
    color = black,
    mark size=2.4pt]
table{std_ratios.dat};
\legend{stuttering (\ref{stuttering sequence with seed 0}), standard (\ref{std sequence with seed 0})}
\end{axis}
\end{tikzpicture}\]
Notice that the ratios for the standard look and say sequence are indeed approaching Conway's constant (\ref{Conway constant}). On the other hand, the ratios for the stuttering look and say sequence appear to be approaching a number just less than 1.5. The main result of this article is an explicit description of this limit: 

\begin{theorem}\label{Main Result}
    The ratios of the number of digits in successive terms in the stuttering look and say sequence (\ref{stuttering sequence with seed 0}) approach
    \begin{equation}\label{stuttering Conway constant}
        \lambda = 1.4453300117\ldots %263612\ldots
    \end{equation}
    which is the maximal real root of the irreducible degree 415 polynomial given in Appendix \ref{appendix: the polynomial}. 
\end{theorem}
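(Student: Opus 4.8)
The plan is to transplant Conway's audioactive-decay machinery to the stuttering rule. The first step is a \emph{splitting theorem}: I would isolate a local condition on a run boundary guaranteeing that a string $LR$ evolves for all time as the concatenation of the independent evolutions of $L$ and $R$. Under the rule $d^n\to n^n d$ the image of a run ends in its original digit $d$, while the image of the following run begins with a block $m^m$ of its own length $m$; the trailing $d$ and the leading $m$ belong to the same run only if $d=m$, so a boundary persists exactly as long as one can rule out $d=m$ at every future step. Certifying this by tracking the finitely many boundary digits and run-lengths, in the spirit of Conway's bounded-interaction lemmas, lets me factor every term of (\ref{stuttering sequence with seed 0}) into independently evolving \emph{atoms}.

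The second step is the analogue of Conway's Cosmological Theorem: after boundedly many applications of the rule every atom decomposes into \emph{common elements} drawn from a finite periodic table, which I would build by decaying candidate atoms and collecting newcomers until the list closes up. Here lies the main obstacle, and it is genuinely new to the stuttering setting. The self-referential rule produces \emph{escaping runs}: a run whose digit equals its length grows without bound, as in $3^3\to3^4\to4^4 3\to4^5\to5^5 4\to\cdots$, so both the digit alphabet and the run-lengths are unbounded and a naive element count is infinite. I would tame the alphabet with a shift symmetry — the decay of a run of length $\ell$ with a large digit $d$ depends on $\ell$ but not on $d$, beyond the single copy of $d$ it sheds — which collapses large-digit runs into finitely many normalized types; the unbounded lengths I would handle by treating the escaping runs outside the finite matrix and proving that they are seeded out of the bulk, lengthen only linearly thereafter, and therefore grow in total length at a rate that does not exceed the bulk rate. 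Making this precise — proving the element list really closes and that the escaping/transuranic part cannot push the rate above $\lambda$ — is the crux of the argument.

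With a finite, closed periodic table for the bulk in hand, the third step is bookkeeping. I would form the nonnegative integer \emph{decay matrix} $M$, whose $(i,j)$ entry records how many copies of element $i$ appear when element $j$ is decayed one step, so that the element populations of successive terms are $M^t v_0$ and the bulk digit-count is a fixed linear functional of this vector; the separately-bounded escaping contribution grows at rate at most $\lambda$, so the total length of the $t$-th term grows at exactly the exponential rate of $M^t v_0$. Checking that $M$, restricted to its recurrent class, is primitive, I would invoke Perron--Frobenius to conclude that the successive-length ratios converge to the dominant eigenvalue $\lambda$ — provided the seed $0$ actually feeds into that recurrent class, an easy check from the opening terms of (\ref{stuttering sequence with seed 0}) and the analogue of Conway's exclusion of the empty and $22$ seeds.

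The final step is computational: $\lambda$ is the largest eigenvalue of $M$, so I would compute the characteristic polynomial of $M$, factor it over the integers, and identify the irreducible degree-$415$ factor whose maximal real root is $1.4453300117\ldots$, the remaining factors corresponding to decoupled or transient parts of the chemistry and carrying smaller roots. Beyond the cosmological theorem, I expect the only delicate points to be verifying irreducibility of the degree-$415$ factor and confirming that no other factor has a larger real root, both of which reduce to finite certificate computations.
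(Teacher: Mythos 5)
Your high-level architecture---a splitting theorem, closure to a finite periodic table, a decay matrix, Perron--Frobenius, and factoring the characteristic polynomial---is exactly the paper's (both are modeled on Conway), and your third and fourth steps essentially coincide with what the paper does. But what you single out as the crux of the argument rests on a misreading of the setting, and this is a genuine gap. Theorem \ref{Main Result} concerns sequences built on standard \emph{decimal} representations: the alphabet is permanently $\{0,1,\ldots,9\}$, and when a run of length $n\geq 10$ is read, it is the decimal string of $n$ that gets stuttered. Hence your ``escaping runs'' do not exist in this setting: the escape $3^3\to3^4\to4^43\to\cdots$ that you track halts at $9^9\to 9^{10}\to(10)^{10}9=101010101010101010109$, which shatters into runs of single $1$'s and $0$'s. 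There are no unbounded digits to normalize by a shift symmetry and no transuranic part to estimate separately; indeed the fragmentation at $9^{10}$ is precisely the mechanism that makes the chemistry finite (714 common elements). Conversely, in the convention you are implicitly working in (every natural number a single digit), the paper explicitly notes that the terms do \emph{not} split into finitely many common elements and that Theorem \ref{Main Result} fails; so even a completed version of your program would be analyzing a different sequence and could not arrive at the stated degree-415 $\lambda$.

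The decimal convention also makes the splitting step far simpler than the Conway-style boundary tracking you propose (which, in decimal, would anyway have to compare the trailing digit of $L$ with the \emph{leading decimal digit} of the next run length, certified for all time). The paper's Theorem \ref{stuttering splitting theorem} reads: $LR$ splits whenever $L$ ends in $0$ and $R$ does not begin with $0$. This is self-certifying, because $0^n\to n^n0$ keeps a trailing $0$ trailing forever, while no descendant of $R$ can ever begin with $0$ since run lengths are written without leading zeros. From there the paper proceeds exactly along your remaining steps: close up the decay of $e_1=10$ to obtain the 714 common elements, observe that the last non-splitting term (\ref{last non-split term}) of (\ref{stuttering sequence with seed 0}) decays back onto nine copies of $10$ (so the whole chemistry recurs), take the dominant eigenvalue of the $714\times714$ decay matrix, and factor its characteristic polynomial as $\lambda^{283}(\lambda-1)(\lambda+1)(\lambda^2+1)(\lambda^4+1)(\lambda^8+1)p(\lambda)$ with $p$ the irreducible degree-415 polynomial of Appendix \ref{appendix: the polynomial}. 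One small caution on your Perron--Frobenius step even in the correct setting: the cyclotomic factors above multiply to $\lambda^{16}-1$, so the decay matrix has periodic strongly connected pieces; ``the recurrent class is primitive'' is therefore not quite the right hypothesis, and one should instead isolate the class realizing the spectral radius, as in Conway's treatment.
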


The proof of Theorem \ref{Main Result} is explained in Section \ref{section::chemistry}. The argument mimics the analogous proof given by Conway in \cite{Conway}. Indeed, we will show how the terms of a stuttering look and say sequence \emph{split} into compounds of substrings called \emph{elements}. The terms of (\ref{stuttering sequence with seed 0}) are eventually all compounds of 714 specific substrings, the \emph{common elements}. The long term behavior of (\ref{stuttering sequence with seed 0}) is completely determined by the \emph{chemistry} of these common elements, which can be summarized by a $714\times714$ \emph{decay matrix}. The growth rate (\ref{stuttering Conway constant}) is found as the maximal real eigenvalue of this decay matrix. 

The computations required to find the 714 common elements and their properties were worked out using a Python module called \texttt{look\_and\_say} (see Appendix \ref{appendix::python}). The \texttt{look\_and\_say} module, available at \url{https://pypi.org/project/look-and-say/}, was written to aid in the exploration of various look and say sequences. The interested reader may find this module useful in a search for more challengers!

\subsection{Decimal convention and other bases}

It is important to note that our stuttering look and say sequences rely on the standard decimal representations of natural numbers. This is contrary to Conway's convention in \cite{Conway} where every natural number is considered a \emph{digit}. Indeed, consider sequences with a seed consisting of ten 1's. In \cite{Conway}, Conway would use commas to specify the terms in this look and say sequence:
\begin{equation*}
1,1,1,1,1,1,1,1,1,1\to 10,1 \to 1, 10, 1, 1\to 1, 1, 1, 10, 2, 1\to \cdots
\end{equation*}
Alternatively, one can rely on decimal representations and consider only the digits $0, 1, 2,\ldots, 9$. With this convention, there is no need for commas, and the seed of ten 1's leads to the following look and say sequence:
\begin{equation*}
1111111111\to 101 \to 111011\to 311021\to\cdots
\end{equation*}
While the two conventions clearly lead to different look and say sequences, it turns out that the resulting chemistries are quite similar. This is because the structure of a generic look and say sequence is governed by Conway's 92 common elements, each of which contains only the digits 1, 2, and 3. In particular, the growth rates of look and say sequences using either convention are given by Conway's constant (\ref{Conway constant}). In fact, Conway's constant gives the generic growth rate of look and say sequences using standard base $b$ representations of integers whenever $b$ is greater than three\footnote{The structure of standard binary and ternary look and say sequences are described nicely by N.~Johnston in \url{http://www.nathanieljohnston.com/2010/11/the-binary-look-and-say-sequence/} and \url{http://www.njohnston.ca/2011/01/further-variants-of-the-look-and-say-sequence/}.}. 

Our stuttering look and say sequences use decimal representations with digits $0, 1, 2,\ldots 9$. For example, here is our stuttering sequence with a seed $1^{10}$:
\begin{equation}\label{stuttering seed ten 1s}
1^{10} \to 101010101010101010101 \to 1^301^301^301^301^301^301^301^301^301^301^2\to\cdots
\end{equation}
In the stuttering case, the chemistry does depend on this convention. In particular, Theorem \ref{Main Result} does not hold when all natural numbers are considered digits. The method we use to prove Theorem \ref{Main Result} breaks down when we consider every natural number as a digit since as all terms do not split into a finite collection of common elements. On the other hand, the proof described in Section \ref{section::chemistry} adapts easily when we consider standard number systems using other bases (binary, ternary, etc.). The following table lists the algebraic integer $\lambda$ (the growth rate), the degree\footnote{The Python session used to compute the values in the table failed to produce a result for the degree of $\lambda$ when $b=12$.} of $\lambda$, and the number of common elements for the stuttering look and say sequence generated by the seed 0 using standard base $b$ representations of integers.
\[\begin{array}{|c|c|c|c|}
    \hline
    b  & \lambda & \text{degree} & \text{no. elements}\\\hline
    2  & 2.8923039932\ldots & 4   & 5  \\
    3  & 2.0062263631\ldots & 8   & 15 \\
    4  & 1.8785595146\ldots & 25  & 82 \\
    5  & 1.6435823791\ldots & 46  & 96 \\
    6  & 1.6042539483\ldots & 221 & 364\\
    7  & 1.5054743588\ldots & 220 & 422\\
    8  & 1.5106915019\ldots & 310 & 664\\
    9  & 1.4376480767\ldots & 312 & 575\\
    10 & 1.4453300117\ldots & 415 & 714\\
    11 & 1.3952766103\ldots & 477 & 859\\
    12 & 1.3988336699\ldots &     & 1043\\
    \hline
\end{array}\] 
Note the degree of $\lambda$ tends to increase with $b$, suggesting that there is not a \emph{most} complicated algebraic integer from this silly source. 

\subsection{Other versions of stuttering look and say sequences} In \cite{other-stutter} the authors consider what they call ``stuttering Conway sequences'' which are closely related to our stuttering look and say sequences. Their version of the stuttering say what you see rule is $d^n\to n^jd^j$ for some fixed natural number $j$. They show that when $j\in\{2,3\}$ these stuttering Conway sequences are equivalent to Conway's look and say sequences. For $j>3$ the Python \texttt{look\_and\_say} module can be used to determine the structure of these sequences after determining appropriate splitting conditions. In particular, the $\lambda$'s (i.e.~growth rates) for these sequences can be found as algebraic integers of various degrees. Initial investigations show the degrees of these $\lambda$'s are relatively low:
\[\begin{array}{|c|ccccccccc|}
    \hline
    j & 2 & 3 & 4 & 5 & 6 & 7 & 8 & 9 & 10\\\hline
    \text{degree} & 71 & 71 & 22 & 34 & 7 & 37 & 18 & 18 & 12\\
    \hline
\end{array}\]

\section{The Chemistry}\label{section::chemistry}

In this section we give more details on the proof of Theorem \ref{Main Result}. In particular, we describe the chemistry of the stuttering look and say sequences in the spirit of Conway's \cite{Conway}. We then briefly discuss stuttering sequences starting with seeds other than 0, culminating in a cosmological conjecture that implies all stuttering look and say sequences grow at the same rate (\ref{stuttering Conway constant}). To fully understand the details, the reader should be familiar with \cite{Conway}. We will not attempt to further summarize the results in \cite{Conway} since we cannot hope to improve upon Conway's delightful treatment. 

Moving forward it will be useful to introduce a bit more notation. Assume $S$ is a string of digits. We extend Conway's exponent notation and write $S^n$ for the concatenation of $n$ copies of $S$. For example, $3321112111=3^221^321^3=3^2(21^3)^2$.
Moreover, we use Conway's subscript notation by letting $S_n$ denote the $n$th descendant of $S$. In other words, the stuttering look and say sequence with seed $S=S_0$ will be denoted
$S_0\to S_1\to S_2\to S_3\to\cdots$.

\subsection{Splitting into elements} 
Following \cite{Conway} we say a string $S=LR$ \emph{splits (into a compound of $L$ and $R$)}  
if $S_n=L_nR_n$ for all $n\geq 0$. We have the following (simpler) analog of Conway's Splitting Theorem:

\begin{theorem}\label{stuttering splitting theorem}
The string $S=LR$ splits into a compound of $L$ and $R$ whenever the last digit in $L$ is 0 and the first digit in $R$ is not 0.
\end{theorem}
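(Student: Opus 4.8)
The plan is to prove the statement by induction on the level $n$, reducing the claim $S_n = L_n R_n$ to two facts: (i) a single application of the stuttering rule respects any boundary between two runs carrying distinct digits, and (ii) the boundary condition ``last digit $0$, first digit nonzero'' is preserved by one application of the rule. The whole argument rests on the \emph{locality} of the stuttering operation, which is the feature that makes any splitting theorem of this flavor possible.

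First I would record that locality precisely. To compute a descendant one decomposes a string into maximal runs $d_1^{m_1}d_2^{m_2}\cdots d_k^{m_k}$ (with $d_i\neq d_{i+1}$), applies the rule $d^m\to m^md$ to each run, and concatenates the outputs; the only place the global structure of the string enters is the run decomposition itself. Hence, if the maximal-run decomposition of $LR$ is exactly the decomposition of $L$ followed by that of $R$, then $(LR)_1=L_1R_1$. This factorization holds precisely when the last run of $L$ does not merge with the first run of $R$, i.e.\ when the last digit of $L$ differs from the first digit of $R$. Under the hypotheses the last digit of $L$ is $0$ and the first digit of $R$ is some $d\neq 0$, so no merging occurs and $(LR)_1=L_1R_1$.

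Next I would show the boundary condition propagates. Write the terminal run of $L$ as $0^k$ with $k\geq 1$; since $R$ begins with a nonzero digit, this is also the terminal run of $L$ inside $LR$, and applying the rule gives $0^k\to k^k0$, whose last digit is $0$, so $L_1$ again ends in $0$. Symmetrically, write the initial run of $R$ as $d^m$ with $d\neq 0$ and $m\geq 1$; the rule gives $d^m\to m^md$, whose first digit is the leading digit of the decimal numeral for $m$, which is nonzero because $m\geq 1$. Thus $R_1$ again begins with a nonzero digit. With these two preservation facts the induction closes immediately: the base case $n=0$ is the hypothesis $S_0=LR=L_0R_0$, and if $S_n=L_nR_n$ with $L_n$ ending in $0$ and $R_n$ beginning with a nonzero digit, then locality gives $S_{n+1}=(L_nR_n)_1=(L_n)_1(R_n)_1=L_{n+1}R_{n+1}$, while the preservation facts keep the boundary hypotheses intact for the next step.

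I expect no serious obstacle; the content is genuinely simpler than Conway's Splitting Theorem, whose many cases arise because the standard rule can manufacture arbitrary adjacencies at a splitting point. The one step demanding a moment of care is the claim that the leading digit of $m^m$ is nonzero even when $m$ has several decimal digits (as in the seed $1^{10}$ computation, where $m=10$ and $m^m$ begins $1010\cdots$); but this is just the observation that a positive integer's decimal numeral has a nonzero leading digit. I would also flag explicitly that the argument invokes the fixed-base convention only through this leading-digit fact, which is exactly the feature that fails when every natural number is treated as a single digit — consistent with the remark in the introduction that Theorem~\ref{Main Result} does not hold under that convention.
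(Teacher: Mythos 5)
Your proposal is correct and follows essentially the same route as the paper: the paper likewise observes that the last digit of $L_n$ stays $0$, that the first digit of $R_n$ is always the leading digit of a decimal numeral and hence nonzero, and then closes with the same induction using the fact that the stuttering operation acts on the two sides separately. You merely spell out the locality and run-decomposition details that the paper leaves implicit.
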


\begin{proof}
If the last digit in $L$ is 0, then the last digit in $L_n$ will be 0 for all $n\geq 0$. On the other hand, the first digit in $R_n$ will always be the leading digit of some decimal representation of a natural number, so the first digit in $R_n$ will never be 0. It follows that the stuttering say what you see operation on $L_nR_n$ can be applied to $L_n$ and $R_n$ separately. Now, a simple induction argument shows $S_n=L_nR_n$ for all $n\geq 0$.
\end{proof}

An \emph{element} is a string that does not split into a compound of smaller strings. By Theorem \ref{stuttering splitting theorem} we know a string is an element whenever the string has at most one run of 0's, and that run appears at the end of the string. For example, the second term of (\ref{stuttering seed ten 1s}) splits into a compound of eleven elements: ten $10$'s and one $1$. The third term of (\ref{stuttering seed ten 1s}) splits into a compound of ten $1^30$'s and one $1^2$. 
Applying the stuttering say what you see operation to a string $S$ and splitting the result yields a compound of elements which we call the \emph{decay} of $S$.  
For example, since $0^{10}\to (10)^{10}0=(10)^{9}10^2$ we say $0^{10}$ decays into a compound of nine $10$'s and one $10^2$. 

\subsection{The common elements} 
Each of the six terms of the stuttering look and say sequence starting with seed 0 shown in (\ref{stuttering sequence with seed 0}) do not split. This continues for another eight terms where we find the last non-splitting term:
{\small\begin{equation}\label{last non-split term}
8^871^363^42^21^254^513^42^316^651^343^31^236^653^72^31^238^8173^31^264^432^51^259^981^379^{10}5^54^53^42^31^20
\end{equation}}
Thanks to the $9^{10}$, the element above decays into a compound of eleven elements, nine of which are $10$, which is the second term of (\ref{stuttering sequence with seed 0}). It follows that every element appearing in a term of (\ref{stuttering sequence with seed 0}) after the seed will appear infinitely often in (\ref{stuttering sequence with seed 0}). We will call these elements the \emph{common elements}. The set $C$ of all common elements can be found as follows: let $C_0=\{10\}$ and for $i>0$ let $C_i$ denote the set obtained from $C_{i-1}$ by adding to it all elements $e\not\in C_{i-1}$ such that $e$ is in the decay of some element of $C_{i-1}$. If $C_{i+1}=C_i$ for some $i$, then $C=C_i$ is finite. This is essentially the algorithm for finding common elements implemented in the Python \texttt{look\_and\_say} module, which found that there are exactly 714 common elements (see Appendix \ref{appendix::python}). 

We denote the common elements by $e_1,\ldots,e_{714}$. In particular, we set $e_1=10$. We will not give the explicit definition of each $e_i$ as a string of digits because it would take up too much space (the longest element, $e_{619}$, has 45,460 digits!). However, one can recover the strings recursively using the periodic table found in Appendix \ref{appendix::periodic table}, which lists the decay of each of the common elements. For example, the first entry in the periodic table states that $e_1$ decays into $e_2$. Since $e_1=10\to1110$, it follows that $e_2=1110$. Using the periodic table we can write the sequence (\ref{stuttering sequence with seed 0}) in terms of elements:
\begin{align*}
0   & \to e_{1}\to e_{2}\to e_{3}\to e_{4}\to e_{5}\to e_{6}\to e_{7}\to e_{8}\to e_{9}\to e_{10}\to e_{11}\to e_{13}\\
    & \to e_{16}\to e_{20}e_1^9e_{17}\to e_{25}e_1^9e_{12}e_2^9e_{21}\to e_{33}e_1^9e_{14}e_2^9e_{15}e_3^9e_{28}\to\cdots
\end{align*}
In particular, since the string (\ref{last non-split term}) is the last non-splitting term of (\ref{stuttering sequence with seed 0}), that string is $e_{16}$. 
Notice $e_1,\ldots,e_{714}$ are not labeled in the order they appear in (\ref{stuttering sequence with seed 0}). Instead, the elements are ordered according to their relative abundances, which will be discussed below.

\subsection{The decay matrix}
Let $D$ denote the $714\times 714$ matrix whose $(i,j)$-entry is the number of times $e_i$ occurs in the decay of $e_j$. The entries in $D$ can be read off the periodic table. For example, according to the periodic table 
\[e_{335}\to e_{366}(e_1^9e_{26})^{175}e_1^9e_{365}e_1^9e_{293}.\]
Counting the $e_1$'s in the expression above we see the $(1,335)$-entry in $D$ is 1593. This is the maximum entry in $D$, and it occurs twice: the $(1,619)$-entry is also 1593. 

As in \cite{Conway}, $D$ is the key to describing the long term behavior of (\ref{stuttering sequence with seed 0}). In particular, a straightforward modification of Conway's argument shows the growth rate of (\ref{stuttering sequence with seed 0}) is given by the dominant (i.e.~maximal real) eigenvalue of $D$. A Python session found this eigenvalue to be (\ref{stuttering Conway constant}) (see Appendix \ref{appendix::python}). As $\lambda$ is an eigenvalue, it is a root of the characteristic polynomial of $D$, which factors as 
\begin{equation*}
\lambda^{283} \left(\lambda - 1\right) \left(\lambda + 1\right) \left(\lambda^{2} + 1\right) \left(\lambda^{4} + 1\right) \left(\lambda^{8} + 1\right) p(\lambda)
\end{equation*}
where $p(\lambda)$ is the 415 degree polynomial given in Appendix \ref{appendix: the polynomial} (see Appendix \ref{appendix::python}). Since (\ref{stuttering Conway constant}) is a not a root of the other factors, it must be a root of $p(\lambda)$. This completes the summary of the proof of Theorem \ref{Main Result}.

An eigenvector of $D$ with eigenvalue (\ref{stuttering Conway constant}) will give the limiting relative abundances of the common elements appearing in (\ref{stuttering sequence with seed 0}). For the sake of space these abundances are not included in the periodic table, but they can be reproduced using the Python \texttt{look\_and\_say} module as shown in Appendix \ref{appendix::python}. As mentioned above, these abundances were used to order the elements. The following table gives the limiting relative abundances for the first three and last of the elements:
\[\begin{array}{|c|ccccc|}
    \hline
    \text{element} & e_1 & e_2 & e_3 & \cdots & e_{714}\\\hline
    \text{abundance (\%)} & 27.81585668 &
                            19.24533252 &
                            13.31552819 &
                             % 9.21279437 &
                            \cdots &
                             0.00000004\\
    \hline
\end{array}\]

\subsection{End elements and a cosmological conjecture} If one focuses on only the last element appearing in each term of (\ref{stuttering sequence with seed 0}), then 26 terms after the seed one will find 
\[e_{244}=9^{10}8^97^86^75^65^44^53^42^31^20.\]
Since $e_{244}\to e_1^{10}e_{244}$, every term in (\ref{stuttering sequence with seed 0}) after this point will also end with $e_{244}$. Now, consider the stuttering look and say sequences starting with a digit seed $d\not=0$. If $d\not=1$ the sequence is the same as (\ref{stuttering sequence with seed 0}) with the exception of the very last digit:
\begin{equation}\label{stutter sequence with seed d not 1}
    d\to 1d\to 111d\to 33311d\to 33332211d\to 444432222211d\to\cdots
\end{equation}
Thus, after 26 terms the only element that appears in (\ref{stutter sequence with seed d not 1}) that is not among the 714 common elements is the following isotope of $e_{244}$:
\begin{equation}\label{isotope of e244}
    9^{10}8^97^86^75^65^44^53^42^31^2d\qquad (2\leq d\leq 9).
\end{equation}
On the other hand, the sequence starting with seed 1 is a bit different:
\begin{equation}\label{stutter sequence with seed 1}
    1\to 11\to 221\to 22211\to 3332221\to 3333333211\to7777777312221\to\cdots
\end{equation}
The 16th and 17th terms of (\ref{stutter sequence with seed 1}) both split into 21 elements, the last elements of each we denote by $\varepsilon_1$ and $\varepsilon_2$ respectively. These two elements are given below:
\begin{align*}
    \varepsilon_1 = &~ 91^343^31^232^31^24^431^23^321^27^761^353^42^215^5413^42^319^94131^23^321^26^651^343^42^2\\
                    &~ 14^43^421^25^541^332^31^24^431^23^321^2, \\
    \varepsilon_2 = &~ 193^31143^42^21^23^42^314^5132^213^412^317^8163^31^254^432^31^25^6141^24^43^421^29^{10}14\\
                    &~ 1^332^213^412^316^7153^31^24^532^31^24^9312^315^6143^31^23^42^314^5132^213^412^31.
\end{align*}
These elements decay as $\varepsilon_1\to \varepsilon_2\to e_{43}e_1^{9}\varepsilon_1$. Thus, the last element in the terms of (\ref{stutter sequence with seed 1}) eventually alternate between $\varepsilon_1$ and $\varepsilon_2$. Since $\varepsilon_1,\varepsilon_2$, and the isotopes of $e_{244}$ given in (\ref{isotope of e244}) do not end with a 0, they can only appear as the last element in a compound. In particular, these elements will all have relative abundance 0. A Python session similar to that in Appendix \ref{appendix::python} shows that eventually each term of (\ref{stutter sequence with seed 1}) splits into a compound of the 714 common elements along with $\varepsilon_1$ or $\varepsilon_2$. It follows that all ten stuttering look and say sequences with seed $d\in\{0,1,2,\ldots,9\}$ grow at rate (\ref{stuttering Conway constant}).

We conjecture that in \emph{every} stuttering look and say sequence (i.e. starting with any nonempty string of digits as a seed) the terms eventually all split into compounds of the 714 common elements, the isotopes of $e_{244}$ in (\ref{isotope of e244}), $\varepsilon_1$, and $\varepsilon_2$. In particular, this would imply that all stuttering look and say sequences grow at rate (\ref{stuttering Conway constant}).

\newpage

\appendix

\section{Python}\label{appendix::python}
The following Python session uses the \texttt{look\_and\_say} module, available at \newline \url{https://pypi.org/project/look-and-say/}. 
This module can be used to study the chemical properties of many variations of look and say sequences. When the chemistry is large, as it is for our stuttering look and say sequences, some of the methods (such as \texttt{generate\_elements()} and \texttt{get\_char\_poly()}) are time intensive. 

{\small
\begin{verbatim}
>>> import look_and_say as ls 
>>> 
>>> # Define the stuttering rule d^n -> n^n d
>>> def stutter(n, d):
...     return (str(n) * n) + d
... 
>>> # Create a LookAndSay object using the stuttering rule
>>> stut_las = ls.LookAndSay(stutter)
>>> 
>>> # Check that the object creates the correct sequence
>>> stut_las.generate_sequence(seed='0', terms=6)
>>> stut_las.get_sequence()
['0', '10', '1110', '333110', '333322110', '4444322222110']
>>> 
>>> # Define a split function corresponding to Theorem 2
>>> sff = ls.SplitFuncFactory()
>>> sff.declare_split_after('0')
>>> split = sff.get_split()
>>> 
>>> # Use a Chemistry object to generate common elements from seed 0
>>> chem = ls.Chemistry(stut_las, split)
>>> chem.generate_elements('0')
>>> 
>>> # Show the total number of common elements
>>> len(chem.get_elements())
714
>>> # Order the elements according to their relative abundance 
>>> chem.order_elements('abundance')
>>>
>>> # Use the periodic table to see properties of elements
>>> periodic_table = chem.get_periodic_table()
>>> periodic_table['E1']
{'string': '10', 'abundance': 27.8158567, 'decay': [E2]}
>>> 
>>> # Show lambda and the characteristic polynomial
>>> chem.get_dom_eigenvalue()
1.4453300117263481
>>> chem.get_char_poly()
lambda**283*(lambda - 1)*(lambda + 1)*(lambda**2 + 1)*(lambda**4 + 1)*...
\end{verbatim}}

\newpage
\pagestyle{empty}

\section{The periodic table}\label{appendix::periodic table} 
\enlargethispage{5\baselineskip}
To save space in the following table, for each $i$ we set
$f_i=e_1^9e_i$,\quad $g_i=e_1^{10}e_i$,\quad $h_i=e_{532}f_{359}^{16}f_{i}$,\quad and\quad $k=f_{237}f_{133}f_{152}$.
{\tiny
\[\begin{array}{|c|l|}
    \hline
    n & \text{decay of }e_n \\\hline
    1 & e_{2}\\
    2 & e_{3}\\
    3 & e_{4}\\
    4 & e_{5}\\
    5 & e_{6}\\
    6 & e_{7}\\
    7 & e_{8}\\
    8 & e_{9}\\
    9 & e_{10}\\
    10 & e_{11}\\
    11 & e_{13}\\
    12 & e_{15}\\
    13 & e_{16}\\
    14 & e_{18}\\
    15 & e_{19}\\
    16 & e_{20}f_{17}\\
    17 & e_{21}\\
    18 & e_{22}\\
    19 & e_{23}\\
    20 & e_{25}f_{12}\\
    21 & e_{28}\\
    22 & e_{29}\\
    23 & e_{30}\\
    24 & e_{31}f_{27}\\
    25 & e_{33}f_{14}\\
    26 & e_{34}f_{35}f_{12}\\
    27 & e_{36}\\
    28 & e_{37}\\
    29 & e_{38}\\
    30 & e_{39}\\
    31 & e_{41}f_{12}\\
    32 & e_{42}\\
    33 & g_{45}\\
    34 & e_{46}f_{12}\\
    35 & e_{40}f_{32}f_{14}\\
    36 & e_{49}\\
    37 & e_{51}\\
    38 & e_{53}\\
    39 & e_{54}\\
    40 & e_{55}\\
    41 & e_{50}f_{47}f_{14}\\
    42 & e_{56}f_{24}\\
    43 & e_{57}\\
    44 & e_{58}\\
    45 & e_{48}f_{24}\\
    46 & e_{59}f_{44}f_{14}\\
    47 & e_{60}\\
    48 & e_{61}\\
    49 & e_{62}\\
    50 & e_{65}\\
    51 & e_{66}\\
    52 & e_{67}\\
    53 & e_{64}f_{68}\\
    54 & e_{69}\\
    55 & e_{71}\\
    56 & e_{72}\\
    57 & e_{75}\\
    58 & e_{73}f_{24}\\
    59 & e_{77}\\
    60 & e_{78}\\
    61 & e_{80}\\
    62 & e_{83}\\
    63 & e_{84}\\
    64 & e_{81}f_{12}\\
    65 & e_{86}\\
    66 & e_{85}\\
    67 & e_{88}\\
    68 & e_{89}\\
    69 & e_{90}\\
    70 & e_{92}\\
    71 & e_{93}\\
    72 & e_{95}\\
    73 & e_{96}\\
    74 & e_{98}\\
    75 & e_{99}f_{82}\\
    76 & e_{100}f_{12}\\
    77 & e_{101}\\
    78 & e_{102}f_{27}\\
    79 & e_{91}f_{94}\\
    80 & e_{103}\\
    81 & e_{79}f_{32}f_{14}\\
    82 & e_{104}\\
    83 & e_{105}\\
    84 & e_{43}f_{70}\\
    \hline
\end{array}
\begin{array}{|c|l|}
    \hline
    n & \text{decay of }e_n \\\hline
    85 & e_{106}\\
    86 & e_{52}f_{63}\\
    87 & e_{108}\\
    88 & e_{109}\\
    89 & e_{111}\\
    90 & e_{112}\\
    91 & e_{76}f_{107}f_{44}f_{14}\\
    92 & e_{43}f_{97}\\
    93 & e_{116}\\
    94 & e_{117}\\
    95 & e_{119}\\
    96 & e_{120}\\
    97 & e_{124}f_{17}\\
    98 & e_{125}\\
    99 & e_{52}f_{113}f_{114}\\
    100 & e_{122}f_{123}f_{118}f_{14}\\
    101 & e_{127}\\
    102 & e_{128}f_{12}\\
    103 & e_{129}\\
    104 & e_{131}\\
    105 & e_{132}\\
    106 & e_{134}\\
    107 & e_{135}\\
    108 & e_{136}f_{126}\\
    109 & e_{137}\\
    110 & e_{138}\\
    111 & e_{139}\\
    112 & e_{76}f_{140}\\
    113 & e_{141}\\
    114 & e_{48}f_{130}\\
    115 & e_{142}\\
    116 & e_{52}f_{70}\\
    117 & e_{143}\\
    118 & e_{145}\\
    119 & e_{146}\\
    120 & e_{147}\\
    121 & e_{144}f_{63}\\
    122 & e_{149}f_{12}\\
    123 & e_{150}\\
    124 & e_{43}f_{115}f_{12}\\
    125 & e_{151}\\
    126 & e_{153}\\
    127 & e_{155}\\
    128 & e_{154}f_{47}f_{14}\\
    129 & e_{110}f_{87}\\
    130 & e_{157}\\
    131 & e_{156}\\
    132 & e_{158}\\
    133 & e_{159}\\
    134 & e_{160}\\
    135 & e_{161}\\
    136 & e_{162}\\
    137 & e_{163}\\
    138 & e_{79}f_{32}f_{121}\\
    139 & e_{164}\\
    140 & e_{165}\\
    141 & e_{43}f_{166}\\
    142 & e_{167}\\
    143 & e_{170}\\
    144 & e_{172}\\
    145 & e_{173}f_{24}\\
    146 & e_{171}f_{87}\\
    147 & e_{174}\\
    148 & e_{175}\\
    149 & e_{20}f_{17}f_{148}f_{14}\\
    150 & e_{176}\\
    151 & e_{177}\\
    152 & e_{178}\\
    153 & e_{179}\\
    154 & e_{110}f_{133}f_{152}\\
    155 & e_{180}\\
    156 & e_{182}\\
    157 & e_{183}\\
    158 & e_{184}\\
    159 & e_{168}f_{24}\\
    160 & e_{185}\\
    161 & e_{186}\\
    162 & e_{187}\\
    163 & e_{188}\\
    164 & e_{189}\\
    165 & e_{190}f_{17}\\
    166 & e_{191}\\
    167 & e_{193}\\
    168 & e_{194}\\
    \hline
\end{array}
\begin{array}{|c|l|}
    \hline
    n & \text{decay of }e_n \\\hline
    169 & e_{201}f_{200}\\
    170 & e_{192}f_{70}\\
    171 & e_{198}f_{202}\\
    172 & e_{204}f_{203}\\
    173 & e_{205}\\
    174 & e_{206}f_{87}\\
    175 & e_{207}f_{24}\\
    176 & e_{208}\\
    177 & e_{196}f_{195}f_{17}\\
    178 & e_{169}f_{63}\\
    179 & e_{209}\\
    180 & e_{210}\\
    181 & e_{211}f_{12}\\
    182 & e_{213}f_{17}\\
    183 & e_{50}f_{212}\\
    184 & e_{214}\\
    185 & e_{217}\\
    186 & e_{218}\\
    187 & e_{219}f_{215}\\
    188 & e_{220}f_{47}f_{197}\\
    189 & e_{222}f_{17}\\
    190 & e_{223}f_{12}\\
    191 & e_{43}f_{224}\\
    192 & e_{225}\\
    193 & e_{226}\\
    194 & e_{227}\\
    195 & e_{48}f_{221}\\
    196 & e_{52}f_{199}\\
    197 & e_{228}f_{17}\\
    198 & e_{91}f_{12}\\
    199 & e_{229}\\
    200 & e_{230}\\
    201 & e_{231}\\
    202 & e_{40}f_{32}f_{121}\\
    203 & e_{40}f_{232}f_{82}\\
    204 & e_{233}\\
    205 & e_{235}\\
    206 & e_{236}\\
    207 & e_{238}\\
    208 & e_{239}\\
    209 & e_{240}\\
    210 & e_{241}f_{97}\\
    211 & e_{20}f_{242}\\
    212 & e_{245}\\
    213 & e_{52}f_{115}f_{12}\\
    214 & e_{246}f_{17}\\
    215 & e_{247}\\
    216 & e_{250}\\
    217 & e_{181}f_{251}\\
    218 & e_{252}\\
    219 & e_{253}f_{12}\\
    220 & e_{20}f_{17}f_{148}f_{152}\\
    221 & e_{255}\\
    222 & e_{256}f_{12}\\
    223 & e_{257}f_{44}f_{14}\\
    224 & e_{258}\\
    225 & e_{260}f_{259}\\
    226 & e_{261}\\
    227 & e_{262}\\
    228 & e_{169}f_{115}f_{12}\\
    229 & e_{43}f_{63}\\
    230 & e_{263}f_{82}\\
    231 & e_{264}\\
    232 & e_{266}\\
    233 & e_{265}\\
    234 & e_{267}\\
    235 & e_{268}\\
    236 & e_{269}f_{32}f_{121}\\
    237 & e_{270}\\
    238 & e_{271}\\
    239 & e_{272}\\
    240 & e_{273}f_{17}\\
    241 & e_{274}\\
    242 & e_{277}\\
    243 & e_{276}\\
    244 & g_{244}\\
    245 & e_{278}f_{126}\\
    246 & e_{249}f_{248}f_{275}f_{12}\\
    247 & e_{279}\\
    248 & e_{282}\\
    249 & e_{181}f_{281}\\
    250 & e_{43}f_{283}\\
    251 & e_{284}\\
    252 & e_{285}\\
    \hline
\end{array}
\begin{array}{|c|l|}
    \hline
    n & \text{decay of }e_n \\\hline
    253 & e_{81}f_{237}f_{133}f_{14}\\
    254 & e_{48}f_{286}\\
    255 & e_{287}\\
    256 & e_{288}f_{133}f_{14}\\
    257 & e_{289}\\
    258 & e_{43}f_{216}\\
    259 & e_{291}\\
    260 & e_{290}\\
    261 & e_{52}f_{216}\\
    262 & e_{292}\\
    263 & e_{234}f_{113}f_{114}\\
    264 & e_{64}f_{294}\\
    265 & e_{295}\\
    266 & e_{168}f_{130}\\
    267 & e_{296}\\
    268 & e_{298}\\
    269 & e_{76}f_{297}f_{94}\\
    270 & e_{299}\\
    271 & e_{300}\\
    272 & e_{301}\\
    273 & e_{303}f_{302}f_{12}\\
    274 & e_{181}f_{304}\\
    275 & e_{280}f_{254}f_{14}\\
    276 & e_{309}\\
    277 & e_{306}f_{308}\\
    278 & e_{310}\\
    279 & e_{311}\\
    280 & e_{43}f_{199}\\
    281 & e_{313}\\
    282 & e_{314}\\
    283 & e_{315}\\
    284 & e_{243}f_{74}^{10}f_{316}\\
    285 & e_{312}f_{97}\\
    286 & e_{31}f_{318}\\
    287 & e_{50}f_{305}\\
    288 & e_{319}\\
    289 & e_{320}\\
    290 & e_{322}\\
    291 & e_{73}f_{321}\\
    292 & e_{81}f_{237}f_{87}\\
    293 & e_{323}\\
    294 & e_{324}\\
    295 & e_{325}\\
    296 & e_{326}\\
    297 & e_{327}f_{44}f_{14}\\
    298 & e_{328}f_{87}\\
    299 & e_{234}f_{63}\\
    300 & e_{329}\\
    301 & e_{330}\\
    302 & e_{332}f_{317}f_{14}\\
    303 & e_{331}\\
    304 & e_{333}\\
    305 & e_{334}f_{17}\\
    306 & e_{335}\\
    307 & e_{337}f_{133}f_{14}\\
    308 & e_{339}\\
    309 & e_{338}\\
    310 & e_{336}f_{293}\\
    311 & e_{340}\\
    312 & e_{8}f_{341}\\
    313 & e_{243}f_{74}^{10}f_{342}\\
    314 & e_{343}\\
    315 & e_{344}\\
    316 & e_{345}f_{244}\\
    317 & e_{346}f_{45}\\
    318 & e_{347}\\
    319 & e_{348}\\
    320 & e_{349}\\
    321 & e_{351}\\
    322 & e_{350}\\
    323 & e_{353}\\
    324 & e_{354}\\
    325 & e_{79}f_{355}\\
    326 & e_{356}\\
    327 & e_{357}\\
    328 & e_{358}\\
    329 & e_{352}f_{87}\\
    330 & e_{360}f_{17}\\
    331 & e_{361}\\
    332 & e_{362}f_{12}\\
    333 & e_{243}f_{74}^{10}f_{363}\\
    334 & e_{364}\\
    335 & e_{366}f_{26}^{175}f_{365}f_{293}\\
    \hline
\end{array}\]

\[\begin{array}{|c|l|}
    \hline
    n & \text{decay of }e_n \\\hline
    336 & e_{367}f_{12}\\
    337 & e_{368}\\
    338 & e_{52}f_{369}\\
    339 & e_{370}\\
    340 & e_{372}\\
    341 & e_{373}\\
    342 & e_{376}\\
    343 & e_{374}\\
    344 & e_{375}\\
    345 & e_{377}\\
    346 & e_{378}\\
    347 & e_{379}\\
    348 & e_{249}f_{248}f_{63}\\
    349 & e_{380}\\
    350 & e_{381}\\
    351 & e_{382}\\
    352 & e_{383}\\
    353 & e_{385}\\
    354 & e_{386}\\
    355 & e_{387}\\
    356 & e_{388}\\
    357 & e_{389}\\
    358 & e_{390}f_{32}f_{121}\\
    359 & e_{391}f_{12}\\
    360 & e_{384}f_{115}f_{12}\\
    361 & e_{393}\\
    362 & e_{50}f_{254}f_{14}\\
    363 & e_{394}\\
    364 & e_{395}\\
    365 & e_{34}f_{396}f_{371}f_{12}\\
    366 & e_{397}f_{12}\\
    367 & e_{110}f_{133}f_{14}\\
    368 & e_{398}\\
    369 & e_{280}f_{195}f_{17}\\
    370 & e_{234}f_{216}\\
    371 & e_{399}f_{118}f_{14}\\
    372 & e_{400}\\
    373 & e_{401}\\
    374 & e_{402}\\
    375 & e_{403}f_{17}\\
    376 & e_{404}\\
    377 & e_{405}\\
    378 & e_{406}\\
    379 & e_{407}\\
    380 & e_{408}\\
    381 & e_{409}\\
    382 & e_{50}f_{47}f_{197}\\
    383 & e_{410}f_{32}f_{121}\\
    384 & e_{412}f_{411}\\
    385 & e_{413}\\
    386 & e_{414}\\
    387 & e_{415}f_{126}\\
    388 & e_{416}\\
    389 & e_{417}\\
    390 & e_{418}f_{94}\\
    391 & e_{419}f_{44}f_{14}\\
    392 & e_{196}f_{254}f_{14}\\
    393 & e_{76}f_{107}f_{420}\\
    394 & e_{421}\\
    395 & e_{422}\\
    396 & e_{423}f_{12}\\
    397 & e_{85}f_{148}f_{14}\\
    398 & e_{192}f_{63}\\
    399 & e_{424}\\
    400 & e_{425}\\
    401 & e_{426}\\
    402 & e_{428}\\
    403 & e_{429}f_{12}\\
    404 & e_{430}f_{427}\\
    405 & e_{392}f_{431}\\
    406 & e_{432}\\
    407 & e_{433}\\
    408 & e_{434}\\
    409 & e_{122}f_{123}f_{118}f_{435}\\
    410 & e_{436}f_{94}\\
    411 & e_{439}\\
    412 & e_{438}\\
    413 & e_{440}\\
    414 & e_{441}\\
    415 & e_{442}\\
    416 & e_{76}f_{107}f_{44}f_{197}\\
    417 & e_{444}\\
    418 & e_{443}f_{107}f_{44}f_{14}\\
    419 & e_{445}\\
    420 & e_{446}f_{17}\\
    421 & e_{447}f_{47}f_{197}\\
    422 & e_{154}f_{305}\\
    423 & e_{448}f_{32}f_{14}\\
    424 & e_{449}\\
    425 & e_{450}\\
    426 & e_{451}\\
    427 & e_{453}\\
    428 & e_{452}f_{47}f_{197}\\
    429 & e_{52}f_{113}f_{454}f_{317}f_{14}\\
    430 & e_{392}f_{12}\\
    \hline
\end{array}
\begin{array}{|c|l|}
    \hline
    n & \text{decay of }e_n \\\hline
    431 & e_{455}\\
    432 & e_{456}\\
    433 & e_{457}\\
    434 & e_{249}f_{248}f_{97}\\
    435 & e_{144}f_{97}\\
    436 & e_{458}f_{107}f_{44}f_{14}\\
    437 & e_{459}f_{12}\\
    438 & e_{460}\\
    439 & e_{461}\\
    440 & e_{462}\\
    441 & e_{463}\\
    442 & e_{464}\\
    443 & e_{122}f_{371}f_{12}\\
    444 & e_{465}\\
    445 & e_{466}\\
    446 & e_{73}f_{221}\\
    447 & e_{467}f_{35}k\\
    448 & e_{468}\\
    449 & e_{469}\\
    450 & e_{471}\\  
    451 & e_{472}\\
    452 & e_{198}f_{35}k\\
    453 & e_{475}\\
    454 & e_{474}f_{473}f_{12}\\
    455 & e_{470}f_{476}\\
    456 & e_{477}\\
    457 & e_{478}\\
    458 & e_{479}f_{12}\\
    459 & e_{28}f_{148}f_{14}\\
    460 & e_{480}\\
    461 & e_{481}\\
    462 & e_{482}\\
    463 & e_{483}\\
    464 & e_{484}f_{215}\\
    465 & e_{485}\\
    466 & e_{486}\\
    467 & e_{196}f_{254}f_{488}f_{12}\\
    468 & e_{489}\\
    469 & e_{491}\\
    470 & e_{234}f_{275}f_{307}^{14}f_{12}\\
    471 & e_{492}f_{17}\\
    472 & e_{493}f_{47}f_{197}\\
    473 & e_{495}f_{254}f_{14}\\
    474 & e_{496}\\
    475 & e_{494}\\
    476 & e_{497}\\
    477 & e_{498}\\
    478 & e_{499}\\
    479 & e_{500}f_{123}f_{118}f_{14}\\
    480 & e_{20}f_{501}\\
    481 & e_{502}\\
    482 & e_{503}\\
    483 & e_{504}\\
    484 & e_{505}f_{12}\\
    485 & e_{490}f_{97}\\
    486 & e_{506}\\
    487 & e_{507}f_{12}\\
    488 & e_{508}f_{254}f_{14}\\
    489 & e_{509}\\
    490 & e_{510}\\
    491 & e_{511}\\
    492 & e_{512}f_{12}\\
    493 & e_{352}f_{133}f_{152}\\
    494 & e_{513}\\
    495 & e_{515}\\
    496 & e_{514}\\
    497 & e_{516}\\
    498 & e_{517}\\
    499 & e_{518}\\
    500 & e_{20}f_{519}f_{12}\\
    501 & e_{521}f_{70}\\
    502 & e_{520}\\
    503 & e_{522}\\
    504 & e_{523}\\
    505 & e_{171}f_{133}f_{14}\\
    506 & e_{525}\\
    507 & e_{524}f_{526}f_{47}f_{14}\\
    508 & e_{169}f_{199}\\
    509 & e_{527}\\
    510 & e_{528}\\
    511 & e_{529}\\
    512 & e_{181}f_{531}f_{530}f_{47}f_{14}\\
    513 & e_{533}\\
    514 & e_{336}f_{215}\\
    515 & e_{234}f_{166}\\
    516 & h_{534}\\
    517 & e_{535}\\
    518 & e_{536}\\
    519 & e_{21}f_{148}f_{14}\\
    520 & e_{537}f_{47}f_{197}\\
    521 & e_{306}f_{200}\\
    522 & e_{538}\\
    523 & e_{539}f_{17}\\
    524 & e_{540}f_{45}\\
    525 & e_{541}\\
    \hline
\end{array}
\begin{array}{|c|l|}
    \hline
    n & \text{decay of }e_n \\\hline
    526 & e_{144}f_{199}\\
    527 & e_{312}f_{70}\\
    528 & e_{543}f_{542}\\
    529 & e_{544}\\
    530 & e_{545}f_{199}\\
    531 & e_{546}\\
    532 & e_{547}f_{12}\\
    533 & e_{548}\\
    534 & e_{549}\\
    535 & e_{550}\\
    536 & e_{551}\\
    537 & e_{81}k\\
    538 & e_{552}f_{17}\\
    539 & e_{554}f_{12}\\
    540 & e_{555}\\
    541 & e_{384}f_{97}\\
    542 & e_{557}\\
    543 & e_{556}f_{12}\\
    544 & e_{558}f_{17}\\
    545 & e_{560}\\
    546 & e_{243}f_{74}^{10}f_{559}\\
    547 & e_{553}f_{133}f_{14}\\
    548 & e_{561}f_{17}\\
    549 & e_{562}\\
    550 & e_{563}\\
    551 & e_{564}f_{17}\\
    552 & e_{181}f_{565}f_{12}\\
    553 & e_{566}\\
    554 & e_{181}f_{567}f_{133}f_{14}\\
    555 & e_{568}\\
    556 & e_{569}\\
    557 & e_{570}\\
    558 & e_{241}f_{115}f_{12}\\
    559 & e_{572}\\
    560 & e_{571}\\
    561 & e_{490}f_{275}f_{12}\\
    562 & e_{573}f_{437}^{17}f_{574}f_{17}\\
    563 & e_{181}f_{576}\\
    564 & e_{181}f_{575}f_{275}f_{12}\\
    565 & e_{578}f_{47}f_{14}\\
    566 & e_{577}\\
    567 & e_{579}f_{580}\\
    568 & e_{581}\\
    569 & e_{20}f_{582}\\
    570 & e_{583}\\
    571 & e_{584}\\
    572 & e_{585}\\
    573 & e_{586}f_{12}\\
    574 & e_{587}f_{12}\\
    575 & e_{588}\\
    576 & e_{589}\\
    577 & e_{490}f_{63}\\
    578 & e_{243}f_{74}^{10}f_{590}f_{199}\\
    579 & e_{243}f_{74}^{10}f_{591}\\
    580 & e_{593}\\
    581 & e_{592}\\
    582 & e_{595}\\
    583 & e_{594}f_{47}f_{197}\\
    584 & e_{596}\\
    585 & e_{597}\\
    586 & e_{598}f_{44}f_{14}\\
    587 & e_{599}f_{14}\\
    588 & e_{243}f_{74}^{10}f_{600}\\
    589 & e_{243}f_{74}^{10}f_{601}f_{17}\\
    590 & e_{602}\\
    591 & e_{603}\\
    592 & e_{605}\\
    593 & e_{604}f_{63}\\
    594 & e_{553}f_{133}f_{152}\\
    595 & e_{606}\\
    596 & e_{609}\\
    597 & e_{392}f_{608}\\
    598 & e_{612}\\
    599 & e_{613}f_{487}^{23}f_{611}f_{610}\\
    600 & e_{614}\\
    601 & e_{345}f_{607}f_{12}\\
    602 & e_{617}\\
    603 & e_{618}\\
    604 & e_{616}\\
    605 & e_{615}\\
    606 & e_{619}\\
    607 & g_{620}f_{14}\\
    608 & e_{622}\\
    609 & e_{621}\\
    610 & e_{625}\\
    611 & e_{626}f_{12}\\
    612 & e_{623}\\
    613 & e_{624}f_{12}\\
    614 & e_{627}\\
    615 & e_{629}\\
    616 & e_{630}\\
    617 & e_{628}\\
    618 & e_{631}\\
    619 & e_{366}f_{26}^{175}f_{365}f_{632}\\
    \hline
\end{array}
\begin{array}{|c|l|}
    \hline
    n & \text{decay of }e_n \\\hline
    620 & g_{633}f_{45}\\
    621 & e_{635}f_{47}f_{197}\\
    622 & e_{470}f_{634}\\
    623 & e_{636}\\
    624 & e_{66}f_{148}f_{14}\\
    625 & e_{638}\\
    626 & e_{524}f_{14}\\
    627 & e_{637}\\
    628 & e_{392}f_{639}\\
    629 & e_{641}\\
    630 & e_{642}\\
    631 & e_{392}f_{640}\\
    632 & e_{643}\\
    633 & g_{644}\\
    634 & e_{649}\\
    635 & e_{646}f_{35}k\\
    636 & e_{647}\\
    637 & e_{392}f_{645}\\
    638 & e_{648}\\
    639 & e_{652}\\
    640 & e_{653}\\
    641 & e_{181}f_{650}\\
    642 & e_{651}\\
    643 & e_{654}\\
    644 & g_{655}\\
    645 & e_{659}f_{47}f_{197}\\
    646 & e_{76}f_{297}f_{12}\\
    647 & e_{656}\\
    648 & e_{658}\\
    649 & e_{657}\\
    650 & e_{663}\\
    651 & e_{662}\\
    652 & e_{470}f_{660}\\
    653 & e_{470}f_{661}\\
    654 & e_{664}\\
    655 & g_{665}\\
    656 & e_{668}\\
    657 & e_{666}\\
    658 & e_{669}\\
    659 & e_{470}f_{667}f_{35}k\\
    660 & e_{672}f_{47}f_{197}\\
    661 & e_{673}\\
    662 & e_{671}\\
    663 & e_{243}f_{74}^{10}f_{670}\\
    664 & e_{674}\\
    665 & g_{675}\\
    666 & e_{677}\\
    667 & e_{678}f_{297}f_{12}\\
    668 & e_{181}f_{575}f_{97}\\
    669 & e_{676}\\
    670 & e_{345}f_{679}\\
    671 & e_{680}f_{47}f_{197}\\
    672 & e_{681}f_{35}k\\
    673 & e_{682}\\
    674 & e_{685}\\
    675 & g_{683}\\
    676 & e_{690}\\
    677 & e_{684}f_{17}\\
    678 & e_{691}f_{12}\\
    679 & g_{686}f_{17}\\
    680 & e_{689}f_{35}k\\
    681 & h_{688}f_{297}f_{12}\\
    682 & h_{687}\\
    683 & g_{692}\\
    684 & e_{694}f_{12}\\
    685 & e_{695}\\
    686 & g_{607}f_{12}\\
    687 & e_{700}\\
    688 & e_{34}f_{371}f_{12}\\
    689 & e_{443}f_{297}f_{12}\\
    690 & e_{693}\\
    691 & e_{50}f_{133}f_{14}\\
    692 & g_{696}\\
    693 & e_{697}\\
    694 & e_{699}f_{133}f_{14}\\
    695 & e_{698}f_{17}\\
    696 & g_{701}\\
    697 & e_{702}\\
    698 & e_{704}f_{12}\\
    699 & e_{703}\\
    700 & e_{705}\\
    701 & g_{708}\\
    702 & e_{711}\\
    703 & e_{712}\\
    704 & e_{713}f_{47}f_{14}\\
    705 & e_{573}f_{17}\\
    706 & g_{679}\\
    707 & g_{706}\\
    708 & g_{707}\\
    709 & e_{714}f_{47}f_{14}\\
    710 & e_{579}f_{709}f_{12}\\
    711 & e_{181}f_{710}f_{17}\\
    712 & e_{241}f_{63}\\
    713 & e_{241}f_{199}\\
    714 & e_{604}f_{199}\\
    % 715 & e_{716}\\
    % 716 & e_{43}f_{715}\\
    % 717 & g_{717}\\
    % 718 & g_{718}\\
    % 719 & g_{719}\\
    % 720 & g_{720}\\
    % 721 & g_{721}\\
    % 722 & g_{722}\\
    % 723 & g_{723}\\
    % 724 & g_{724}\\
    \hline
\end{array}\]
}

% \newpage

\section{The polynomial}\label{appendix: the polynomial} 
\enlargethispage{5\baselineskip}
\tiny{\noindent$\lambda^{415}\!-\lambda^{414}\!- 2\lambda^{410}\!+\lambda^{409}\!+\lambda^{408}\!- 2\lambda^{407}\!+\lambda^{406}\!-\lambda^{404}\!- 10\lambda^{402}\!+\lambda^{401}\!- 2\lambda^{400}\!+ 4\lambda^{399}\!- 152\lambda^{397}\!+ 172\lambda^{396}\!+ 14\lambda^{395}\!+ 24\lambda^{394}\!+ 3\lambda^{393}\!+ 164\lambda^{392}\!- 156\lambda^{391}\!- 23\lambda^{390}\!+ 139\lambda^{389}\!- 235\lambda^{388}\!+ 201\lambda^{387}\!+ 167\lambda^{386}\!+ 23\lambda^{385}\!+ 27\lambda^{384}\!- 2349\lambda^{383}\!- 1615\lambda^{382}\!+ 2198\lambda^{381}\!+ 1178\lambda^{380}\!- 2152\lambda^{379}\!+ 5149\lambda^{378}\!+ 508\lambda^{377}\!+ 1331\lambda^{376}\!- 419\lambda^{375}\!+ 417\lambda^{374}\!- 1561\lambda^{373}\!+ 3735\lambda^{372}\!+ 8875\lambda^{371}\!- 16260\lambda^{370}\!+ 11579\lambda^{369}\!- 10976\lambda^{368}\!+ 1496\lambda^{367}\!- 4706\lambda^{366}\!- 9123\lambda^{365}\!- 1915\lambda^{364}\!+ 18287\lambda^{363}\!+ 45625\lambda^{362}\!- 71399\lambda^{361}\!- 17446\lambda^{360}\!- 19863\lambda^{359}\!- 70156\lambda^{358}\!+ 90360\lambda^{357}\!- 42652\lambda^{356}\!+ 71132\lambda^{355}\!+ 46501\lambda^{354}\!+ 214999\lambda^{353}\!- 79007\lambda^{352}\!- 151659\lambda^{351}\!+ 257677\lambda^{350}\!- 446037\lambda^{349}\!+ 292253\lambda^{348}\!- 713468\lambda^{347}\!+ 436555\lambda^{346}\!+ 364420\lambda^{345}\!+ 338653\lambda^{344}\!- 134998\lambda^{343}\!- 10046\lambda^{342}\!+ 22201\lambda^{341}\!- 438631\lambda^{340}\!+ 577203\lambda^{339}\!- 177704\lambda^{338}\!- 990256\lambda^{337}\!+ 917295\lambda^{336}\!- 2410605\lambda^{335}\!- 287705\lambda^{334}\!- 1611151\lambda^{333}\!+ 3591794\lambda^{332}\!- 323597\lambda^{331}\!+ 1867931\lambda^{330}\!- 3806263\lambda^{329}\!+ 288492\lambda^{328}\!- 4967777\lambda^{327}\!+ 3848436\lambda^{326}\!- 7313096\lambda^{325}\!+ 8412621\lambda^{324}\!- 4086405\lambda^{323}\!+ 10656364\lambda^{322}\!- 12221741\lambda^{321}\!+ 4691770\lambda^{320}\!- 3104647\lambda^{319}\!+ 9384603\lambda^{318}\!- 12741889\lambda^{317}\!+ 28175225\lambda^{316}\!- 36732683\lambda^{315}\!+ 27549087\lambda^{314}\!- 29113688\lambda^{313}\!+ 27904936\lambda^{312}\!- 64022174\lambda^{311}\!+ 86900035\lambda^{310}\!- 44038451\lambda^{309}\!+ 69272187\lambda^{308}\!- 100228015\lambda^{307}\!+ 101153700\lambda^{306}\!- 112360662\lambda^{305}\!+ 140263944\lambda^{304}\!- 198709702\lambda^{303}\!+ 227106421\lambda^{302}\!- 199066229\lambda^{301}\!+ 158108336\lambda^{300}\!- 177991300\lambda^{299}\!+ 160265366\lambda^{298}\!- 216764982\lambda^{297}\!+ 458594331\lambda^{296}\!- 514396955\lambda^{295}\!+ 623848863\lambda^{294}\!- 478106150\lambda^{293}\!+ 319570262\lambda^{292}\!- 449486144\lambda^{291}\!+ 242806151\lambda^{290}\!- 503482313\lambda^{289}\!+ 537025658\lambda^{288}\!- 117520757\lambda^{287}\!+ 205010170\lambda^{286}\!+ 216142705\lambda^{285}\!- 69286579\lambda^{284}\!- 249935826\lambda^{283}\!- 14860455\lambda^{282}\!- 56994233\lambda^{281}\!+ 694547\lambda^{280}\!+ 345346471\lambda^{279}\!- 469219392\lambda^{278}\!+ 594365027\lambda^{277}\!- 1037912862\lambda^{276}\!+ 1069314068\lambda^{275}\!- 1277383331\lambda^{274}\!+ 2290966254\lambda^{273}\!- 3704443101\lambda^{272}\!+ 4343428926\lambda^{271}\!- 4028145929\lambda^{270}\!+ 3680122414\lambda^{269}\!- 2315094078\lambda^{268}\!+ 570793820\lambda^{267}\!- 2629080969\lambda^{266}\!+ 6410285252\lambda^{265}\!- 7133144189\lambda^{264}\!+ 6733705466\lambda^{263}\!- 5378790644\lambda^{262}\!+ 8228210949\lambda^{261}\!- 8942185731\lambda^{260}\!+ 5979630418\lambda^{259}\!- 7905989864\lambda^{258}\!+ 9737011738\lambda^{257}\!- 9911610982\lambda^{256}\!+ 9973159722\lambda^{255}\!- 14957013413\lambda^{254}\!+ 14890985003\lambda^{253}\!- 10726321400\lambda^{252}\!+ 15125054942\lambda^{251}\!- 20529308050\lambda^{250}\!+ 17284779230\lambda^{249}\!- 15931854028\lambda^{248}\!+ 12442209687\lambda^{247}\!- 23517231728\lambda^{246}\!+ 22425011617\lambda^{245}\!- 17938407328\lambda^{244}\!+ 37345675143\lambda^{243}\!- 46891616723\lambda^{242}\!+ 39563694151\lambda^{241}\!- 20239106041\lambda^{240}\!+ 25060340748\lambda^{239}\!- 43257854036\lambda^{238}\!+ 52510645950\lambda^{237}\!- 49049636561\lambda^{236}\!+ 53510422084\lambda^{235}\!- 49930847956\lambda^{234}\!+ 49528038601\lambda^{233}\!- 65436262565\lambda^{232}\!+ 69787936146\lambda^{231}\!- 62639300626\lambda^{230}\!+ 61154652466\lambda^{229}\!- 103295120449\lambda^{228}\!+ 150716638429\lambda^{227}\!- 161589047937\lambda^{226}\!+ 68355002194\lambda^{225}\!- 71534955143\lambda^{224}\!+ 108040997051\lambda^{223}\!- 70767774696\lambda^{222}\!+ 52614630562\lambda^{221}\!- 20391846659\lambda^{220}\!- 14432367525\lambda^{219}\!- 44881321887\lambda^{218}\!+ 56948736961\lambda^{217}\!- 155089185437\lambda^{216}\!+ 301707147402\lambda^{215}\!- 241030438261\lambda^{214}\!+ 209243008202\lambda^{213}\!- 129455951837\lambda^{212}\!- 22600217909\lambda^{211}\!+ 169593728583\lambda^{210}\!- 233692316096\lambda^{209}\!+ 257322864009\lambda^{208}\!- 243574029523\lambda^{207}\!+ 149000586795\lambda^{206}\!+ 6984614817\lambda^{205}\!- 144155421524\lambda^{204}\!+ 227532222220\lambda^{203}\!- 267555275442\lambda^{202}\!+ 439293169044\lambda^{201}\!- 480642841570\lambda^{200}\!+ 24888706756\lambda^{199}\!+ 390863025544\lambda^{198}\!- 669336450272\lambda^{197}\!+ 660787478734\lambda^{196}\!- 393807654095\lambda^{195}\!+ 118652462360\lambda^{194}\!+ 177904574224\lambda^{193}\!- 195090491911\lambda^{192}\!+ 242407763338\lambda^{191}\!- 434438066224\lambda^{190}\!+ 555624004927\lambda^{189}\!- 577323055405\lambda^{188}\!+ 73441508664\lambda^{187}\!+ 459487464873\lambda^{186}\!- 619671087133\lambda^{185}\!+ 785938357606\lambda^{184}\!- 691599089099\lambda^{183}\!+ 584717564562\lambda^{182}\!- 229276164960\lambda^{181}\!- 435302270582\lambda^{180}\!+ 1018087980286\lambda^{179}\!- 1151064030605\lambda^{178}\!+ 860147866187\lambda^{177}\!- 903396131713\lambda^{176}\!+ 467921483551\lambda^{175}\!+ 48055410710\lambda^{174}\!- 2287054714\lambda^{173}\!+ 657315271193\lambda^{172}\!- 845023767140\lambda^{171}\!+ 365819144460\lambda^{170}\!- 327974853531\lambda^{169}\!- 921734028252\lambda^{168}\!+ 2592590065033\lambda^{167}\!- 3217381089048\lambda^{166}\!+ 3454490790540\lambda^{165}\!- 2562807399867\lambda^{164}\!+ 1138110641904\lambda^{163}\!- 1117436518852\lambda^{162}\!+ 1081360677836\lambda^{161}\!+ 159127306370\lambda^{160}\!- 1003017856247\lambda^{159}\!+ 769570550798\lambda^{158}\!- 61453236065\lambda^{157}\!- 1882037626075\lambda^{156}\!+ 3471445808468\lambda^{155}\!- 3820051566185\lambda^{154}\!+ 4964115234476\lambda^{153}\!- 4388748242709\lambda^{152}\!+ 2087614094823\lambda^{151}\!- 1012832380698\lambda^{150}\!- 402147701987\lambda^{149}\!+ 2239666022624\lambda^{148}\!- 3024989898833\lambda^{147}\!+ 2759983321854\lambda^{146}\!- 1245321388801\lambda^{145}\!- 1265344589045\lambda^{144}\!+ 3283087296960\lambda^{143}\!- 3786332193048\lambda^{142}\!+ 4228696334836\lambda^{141}\!- 3540636875839\lambda^{140}\!+ 1660177144413\lambda^{139}\!- 657558027230\lambda^{138}\!- 1393159449378\lambda^{137}\!+ 4479263491228\lambda^{136}\!- 5143403161319\lambda^{135}\!+ 3548391217258\lambda^{134}\!- 2061855337292\lambda^{133}\!+ 608347763793\lambda^{132}\!+ 916702245119\lambda^{131}\!- 1403656888380\lambda^{130}\!+ 895387140470\lambda^{129}\!- 596962056468\lambda^{128}\!- 372593898767\lambda^{127}\!+ 1722815865484\lambda^{126}\!- 2876081368892\lambda^{125}\!+ 4903989961747\lambda^{124}\!- 5184139923078\lambda^{123}\!+ 3827669726146\lambda^{122}\!- 3396401493971\lambda^{121}\!+ 1840634701896\lambda^{120}\!- 57894012086\lambda^{119}\!+ 420269636152\lambda^{118}\!- 931203594538\lambda^{117}\!+ 749838989034\lambda^{116}\!- 1303848098102\lambda^{115}\!+ 1712367225787\lambda^{114}\!- 1079775670031\lambda^{113}\!+ 798922063995\lambda^{112}\!- 358624404143\lambda^{111}\!- 964934047977\lambda^{110}\!+ 1344018008345\lambda^{109}\!- 1201801395764\lambda^{108}\!+ 1414676146093\lambda^{107}\!- 545485170983\lambda^{106}\!- 425781916461\lambda^{105}\!+ 1497191180344\lambda^{104}\!- 2784441854841\lambda^{103}\!+ 2265308201375\lambda^{102}\!- 1439585313026\lambda^{101}\!+ 933888278223\lambda^{100}\!+ 216751307593\lambda^{99}\!- 947345111075\lambda^{98}\!+ 784146068231\lambda^{97}\!- 561073801579\lambda^{96}\!+ 880818469529\lambda^{95}\!- 628605474171\lambda^{94}\!- 63663745594\lambda^{93}\!+ 293310788698\lambda^{92}\!+ 54298932389\lambda^{91}\!- 367105535532\lambda^{90}\!+ 8793190896\lambda^{89}\!- 255360886114\lambda^{88}\!+ 697553443832\lambda^{87}\!- 273126462799\lambda^{86}\!+ 290421822161\lambda^{85}\!- 344310370986\lambda^{84}\!- 341073041946\lambda^{83}\!+ 528392372325\lambda^{82}\!- 101030105134\lambda^{81}\!- 172978077537\lambda^{80}\!+ 288430500385\lambda^{79}\!- 261105405990\lambda^{78}\!+ 144281228493\lambda^{77}\!- 192899310909\lambda^{76}\!+ 68527486502\lambda^{75}\!+ 160630993291\lambda^{74}\!- 15328235046\lambda^{73}\!+ 47965613456\lambda^{72}\!- 179942185501\lambda^{71}\!- 228930512565\lambda^{70}\!+ 281912848387\lambda^{69}\!- 30539724554\lambda^{68}\!+ 238082470425\lambda^{67}\!- 308262713898\lambda^{66}\!+ 239226529191\lambda^{65}\!- 258301267138\lambda^{64}\!+ 149717463671\lambda^{63}\!+ 44246019876\lambda^{62}\!- 132720953601\lambda^{61}\!+ 126942751272\lambda^{60}\!- 141173427985\lambda^{59}\!+ 198633867855\lambda^{58}\!- 286082550789\lambda^{57}\!+ 253927446808\lambda^{56}\!- 203924714198\lambda^{55}\!+ 284882780202\lambda^{54}\!- 231164110858\lambda^{53}\!+ 82138577114\lambda^{52}\!- 107303267293\lambda^{51}\!+ 127779547325\lambda^{50}\!- 16831165349\lambda^{49}\!- 46903599889\lambda^{48}\!+ 51363804233\lambda^{47}\!- 54113529258\lambda^{46}\!+ 33012502618\lambda^{45}\!+ 1116589673\lambda^{44}\!- 46735331889\lambda^{43}\!+ 53707706055\lambda^{42}\!- 27759673158\lambda^{41}\!+ 35942236335\lambda^{40}\!- 58341717530\lambda^{39}\!+ 42389805588\lambda^{38}\!- 14971696112\lambda^{37}\!+ 19275484678\lambda^{36}\!- 22276279446\lambda^{35}\!+ 5986199951\lambda^{34}\!+ 9849902838\lambda^{33}\!- 13940397607\lambda^{32}\!+ 10226428347\lambda^{31}\!- 5814798734\lambda^{30}\!+ 1286441086\lambda^{29}\!+ 3329800195\lambda^{28}\!- 5980937407\lambda^{27}\!+ 5277989565\lambda^{26}\!- 4977690430\lambda^{25}\!+ 3901846285\lambda^{24}\!- 1623245512\lambda^{23}\!+ 600047358\lambda^{22}\!+ 762053872\lambda^{21}\!- 2190617631\lambda^{20}\!+ 2464733477\lambda^{19}\!- 1556918104\lambda^{18}\!+ 694596945\lambda^{17}\!- 576963184\lambda^{16}\!+ 436526589\lambda^{15}\!- 151831670\lambda^{14}\!- 2902204\lambda^{13}\!+ 52238285\lambda^{12}\!- 79516331\lambda^{11}\!+ 52316396\lambda^{10}\!- 20597389\lambda^{9}\!+ 10506178\lambda^{8}\!- 11260431\lambda^{7}\!+ 7817292\lambda^{6}\!- 1519980\lambda^{5}\!- 711051\lambda^{4}\!- 79915\lambda^{3}\!- 6281\lambda^{2}\!- 407\lambda\!- 11$}

% \vspace{-0.03in} % to stop the unfortunate last line break
\normalsize{

}

% \begin{biog}
% \item[Jonathan Comes] received his Ph.D. in mathematics from The University of Oregon. The majority of his research is in representation theory, but he enjoys exploring all sorts of topics in mathematics and computer science with the students at The College of Idaho.
% \begin{affil}
% Assistant Professor, The College of Idaho, Caldwell ID 83605\\
% jcomes@collegeofidaho.edu
% \end{affil}

% \end{biog}
\vfill\eject

\end{document}